\documentclass[11pt,reqno]{amsart}

\usepackage{CBstyle}
\usepackage[T1]{fontenc}
\usepackage[english]{babel}

\usepackage[margin=1.1in,footskip=0.2in]{geometry}

\title{The Manin--Drinfeld theorem and the rationality of Rademacher symbols}

\author{Claire Burrin}
\address{ETH Z\"urich, D-MATH, Zurich, Switzerland}
\email{claire.burrin@math.ethz.ch}

\date{\today}

\begin{document}
\maketitle

\begin{abstract}
For any noncocompact Fuchsian group $\G$, we show that periods of the canonical differential of the third kind associated to residue divisors of cusps are expressed in terms of Rademacher symbols for $\Gamma$ --- generalizations of periods appearing in the classical theory of modular forms. This result provides a relation between Rademacher symbols and the famous theorem of Manin and Drinfeld. More precisely, Fuchsian groups whose Rademacher symbols are rational-valued verify the statement of Manin--Drinfeld. We then establish the rationality of Rademacher symbols for various families of Fuchsian groups.
%
%Through this relation, we extend the validity of Manin--Drinfeld's theorem from modular curves associated to congruence groups to general non-cocompact triangle groups, Galois Belyi curves (also called curves with many automorphisms), and certain Shimura curves. %Finally, we give explicit formulas for the Rademacher symbols of the associated uniformizing Fuchsian groups.
\end{abstract}

\section{{\bf Introduction}}

Let $\Gamma$ be a noncocompact Fuchsian group. Then $\Gamma\backslash\h$ is an open Riemann surface, and its compactification $X$ can be seen as a smooth algebraic curve. Fixing a base point $x\in X$, we have an embedding  $X\hookrightarrow J(X)$ into its Jacobian $J(X)$. Manin and Mumford conjectured that for any such embedding, there are only finitely many torsion points in the image. The conjecture was first confirmed by Raynaud \cite{Raynaud1983}. 

If $\G$ is a congruence subgroup of $\SL_2(\Z)$, Drinfeld \cite{Drinfeld1973}, generalizing a result of Manin \cite{Manin1972}, provided significant examples of these finitely many torsion points on modular curves; the subgroup $C(\G)\subset J(X)$ spanned by the images of the cusps of $\G$ is contained in $J(X)^{\rm tor}$. Their proofs used Hecke operators, limiting the reach of the statement to modular curves. In this note, we observe a new connection between the Manin--Drinfeld theorem and Rademacher symbols, which are ubiquitous invariants appearing in connection to Dedekind sums \cite{Rademacher}, class numbers of real quadratic fields \cite{Meyer1957,Zagier1975}, aspects of Atiyah--Bott--Singer index theory \cite{Atiyah1987}, and linking numbers of knots \cite{GhysICM,DukeImamogluToth2017}. On the basis of this relation, we give a new proof of the Manin--Drinfeld theorem and examine for which other families of curves it might hold.

As is customary, we reframe this discussion in terms of divisors. Recall that a divisor is a formal sum 
$$
D = \sum a_i x_i,
$$
where $x_i\in X$, and $a_i$ are integers, only finitely many of which are non-zero. The degree of the divisor $D$ is the sum $\sum a_i$, and divisors of degree zero form a group. Let $D$ be a divisor of degree zero. By the Abel--Jacobi theorem, there is an isomorphism between divisor classes (with respect to linear equivalence) of degree zero and the Jacobian $J(X)$ modulo periods. %, cf.~p.59 of \cite{LangAbelianFts}.  
By Riemann--Roch, there is a differential $\omega_D$ of the third kind (i.e., a meromorphic differential) whose residual divisor is $D$, and this differential can be made unique; see, e.g., Theorem 5.3 in \cite{LangAbelian}. Then $D$ is torsion if and only if the periods of $\omega_D$ are in $2\pi i\Q$.% see Section 2 of \cite{MurtyRamakrishnan1987}. 

\begin{Thm}\label{thm:1}
Let $\Gamma$ be a noncocompact Fuchsian group with $h$ inequivalent cusps
 $\frak{a}_1,\dots,\frak{a}_h$. Let $D=\sum m_i (\frak{a}_i)$ be a divisor of degree zero, and let $\omega_D$ be the canonical differential of the third kind associated to the residue divisor $D$. Then for every hyperbolic element $\g\in\G$ of positive trace,
$$
\int_\gamma \omega_D = 2\pi i\left(m_1\Psi_{\frak{a}_1}(\gamma)+\dots+m_h \Psi_{\frak{a}_h}(\gamma)\right),
$$
where $\Psi_\frak{a}$ is the Rademacher symbol on $\Gamma$ for the cusp $\frak{a}$. Moreover, if all Rademacher symbols on $\G$ are rational-valued, then $C(\G)\subset J(X)^{\rm tor}$. 
\end{Thm}

We first describe Rademacher symbols as they appear in the classical theory of modular forms and elliptic functions.  Dedekind showed that the transformation-theory of the $\eta$-function
\begin{align}\label{eta}
\eta(z) = q^{\tfrac{1}{24}}\prod_{n\geq1} (1-q^n),
\end{align}
($q=e^{2\pi iz}$), under the linear fractional action of $\g=\bsm a&b\\ c&d\esm\in\SL_2(\Z)$ on the upper half-plane $\h$  is encoded by the Dedekind symbol
$$
\Phi(\gamma)\ =\ \begin{dcases} \frac{b}{d} & {\rm if }\ c=0,\\
\frac{a+d}{c}-12\sign(c) \cdot s(a,c) & {\rm if }\ c\neq0.
\end{dcases} %\qquad (\g=\bsm a&b\\ c&d\esm\in\SL_2(\Z)).
$$
where $s(a,c)$ are Dedekind sums --- arithmetic sums depending only on $a/c$, and obeying a reciprocity law that makes them easy to compute via the Euclidean algorithm. The same reciprocity law also allows to show that $\Phi(\g)$ is always an integer. These classical facts are exposed in the monograph of Rademacher's lecture notes on the subject; see \cite{Rademacher}. In his study of Dedekind sums, Rademacher introduced the associated function 
\begin{align}\label{Rademacher}
\Psi(\gamma)=\Phi(\gamma)-3\sign(c(a+d)),
\end{align}
which has the remarkable feature of being conjugacy class invariant. In fact, on hyperbolic elements, it can be realized as the period
$$
\int_\g E_2(z)dz = \Psi(\g),
$$
where $E_2$ is Hecke's modified Eisenstein series of weight 2 (see \secref{sec:proof} and \cite{DukeImamogluToth2018} for further historical references). 

This is for the classical theory. Over the last 50 years, analogues of these various functions have been constructed for other noncocompact Fuchsian groups. A standard construction builds on extending the celebrated first limit formula of Kronecker, which classically can be used to deduce the transformation-theory of $\eta$ and determine the Dedekind sums. The classical first limit formula of Kronecker expresses the constant term in the Laurent expansion of the non-holomorphic Eisenstein series $E(z,s)$ for $\PSL_2(\Z)$ at its pole $s=1$. More generally, to each cusp $\frak{a}$ of a general noncocompact Fuchsian group, one has an associated non-holomorphic Eisenstein series $E_\fa(z,s)$. On the basis of Selberg's study of their analytic properties, one obtains a Kronecker limit formula in this extended setting, and this limit formula can be used to define a generalized Dedekind symbol $\Phi_\fa$. Such constructions have been extensively studied in recent years \cite{Goldstein1973,Burrin2017,JorgensonO'SullivanSmajlovic2020}. Much less attention has been given the associated Rademacher symbol $\Psi_\fa:\G\to\R$, which we define in \secref{sec:Rademacher}. In \lemref{lemma}, we show that $\Psi_\fa$ can again be realized as the hyperbolic period 
$$
\int_\g E_{2,\fa}(z)dz = \Psi_\fa(\g).
$$ 

The Fuchsian Rademacher symbols $\Psi_\fa$ are a priori real-valued. In the face of \thmref{thm:1}, we do not expect Fuchsian Rademacher symbols to be rational-valued in general. In fact, Rohrlich observed that for some non-congruence subgroups of $\Gamma(2)$ appearing in connection to the Fermat curve, $C(\Gamma)$ must be infinite (see p.~198 in \cite{KubertLangBook}) and thus the associated Rademacher symbols can not all be rational. 
 Nevertheless, for certain groups, it is possible to show that the image is in fact rational through rigorous arithmetic computations. This is the case for the Rademacher symbol $\Psi_\infty^\G$ when $\G$ is any principal congruence group $\G(N)$ or Hecke congruence group $\G_0(N)$, following the works of \cite{Driencourt1983}, \cite{Takada1986}, and \cite{Vassileva}. By a theorem of Helling \cite{Helling1966}, every noncocompact arithmetic subgroup of $\SL_2(\R)$ is conjugate to a subgroup of $\G_0(N)^+$, the group obtained by adding all Atkin--Lehner involutions to $\G_0(N)$, for some squarefree $N$. (See \secref{sec:rationality} for an explicit parametrization.) Moreover, each maximal noncocompact arithmetic group (under inclusion) is a conjugate of $\G_0(N)^+$ for some squarefree $N$. (The converse does not hold; see \cite{Baily1987}.) As noted by Helling, the groups $\G_0(N)^+$ have a unique cusp, which can be taken at the point at $\infty$. The 44 groups $\G_0(N)^+$ of genus 0 famously appear in connection to monstruous moonshine.

\begin{Thm}\label{thm:2}
Rademacher symbols are rational for all genus 0 noncomcompact Fuchsian groups, for all Helling groups $\G_0(N)^+$, and for all congruence subgroups of $\SL_2(\Z)$.
\end{Thm}

In particular, we recover a new proof of the Manin--Drinfeld theorem. By a theorem of Takeuchi \cite{Takeuchi1977}, we know that among triangle groups, only finitely many (up to conjugation) are arithmetic; it is interesting to note that the phenomenon described by Manin--Drinfeld does not restrict to arithmetic groups. The proof of \thmref{thm:2} builds on certain relations between Rademacher symbols for $\G_1$ and $\G$, when $\G_1<\G$ is a normal subgroup of finite index; see \propref{proposition}.

\subsection{Acknowledgments.} This note was inspired by the beautiful paper of Murty and Ramakrishnan \cite{MurtyRamakrishnan1987} on a similar connection between the Manin--Drinfeld theorem and generalized Ramanujan sums. The author thanks Jay Jorgenson for many discussions on connections between Manin--Drinfeld and Dedekind symbols of various types, and Asbj\o rn Nordentoft for pointing out a mistake in a previous version of this manuscript.

\section{{\bf Rademacher symbols for cusps}}\label{sec:Rademacher}

The classical monographs of Shimura \cite{Shimura} and Kubota \cite{Kubota} and the more recent book of Iwaniec \cite{Iwaniec} serve as references for facts cited in this section on Fuchsian groups and Eisenstein series. 

\subsection{Cusps, Eisenstein series}

Let $\frak{a}$ be a cusp for $\Gamma$, and let $\Gamma_\frak{a}$ be the stabilizer subgroup of $\frak{a}$, i.e., $\Gamma_\frak{a}=\{\gamma\in\Gamma:\gamma\frak{a}=\frak{a}\}$. The group $\Gamma_\frak{a}$ is infinite cyclic. We will denote by $\g_\fa$ the cyclic generator of $\G_\fa$. We say that two cusps $\frak{a}$, $\frak{b}$ are equivalent if $\frak{b}=\gamma\frak{a}$ for some $\gamma\in\Gamma$. If $\frak{a}$ and $\frak{b}$ are equivalent, then $\Gamma_\frak{a}=\Gamma_\frak{b}$. 

We may choose a scaling transformation $\sigma_\frak{a}\in\PSL(2,\R)$ such that 
\begin{align*}
\sigma_\frak{a}(\infty)&=\frak{a}\qquad\text{and}\qquad
\sigma_\frak{a}^{-1}\Gamma_\frak{a}\sigma_\frak{a} = \pm\bpm 1&\Z\\&1\epm.
\end{align*}
%see, e.g.,~\cite[Section 2.2]{Iwaniec}. 
Such a choice of $\sigma_\frak{a}$ is only unique up to right multiplication by an element of $\pm\bsm 1&\R\\0&1\esm$. Conjugating $\Gamma$ by $\sigma_\frak{a}$ provides a group with a cusp at infinity of width 1, and facilitates computation.

The group $\Gamma$ acts discontinuously on the upper half-plane $\h$ by fractional linear transformation $\pm\bsm a&b\\ c&d\esm:z\mapsto\tfrac{az+b}{cz+d}$. For each $z\in\h$, the Eisenstein series for the cusp $\frak{a}$ is defined by
$$
E_\frak{a}(z,s) =\sum_{\gamma\in\Gamma_\frak{a}\backslash\Gamma} \im(\sigma_\frak{a}^{-1}\gamma z)^s,
$$
where $s$ is a complex parameter controlling the convergence of the infinite series; the series $E_\frak{a}(z,s)$ converges absolutely and uniformly on compact subsets when $\re(s)>1$. Beyond this half-plane, Eisenstein series admit a meromorphic continuation to the whole complex plane, as famously proved by Selberg. The definition of  $E_\frak{a}(z,s)$ does not depend on the particular choice of scaling $\sigma_\frak{a}$. Moreover, equivalent cusps yield identical Eisenstein series. 

To understand the behavior of the Eisenstein series $E_\frak{a}$ at the (not necessarily distinct) cusp $\frak{b}$ of $\Gamma$, one considers the series $E_\frak{a}(\sigma_\frak{b}z,s)$, for which we have the relation
$$
E_\frak{a}(\sigma_\frak{b}z,s) = E_\frak{a}(\gamma_\frak{b} \sigma_\frak{b}z,s) = E_\frak{a}(\sigma_\frak{b}(z+1),s)
$$
and the resulting Fourier expansion is explicitly given by
$$
E_\fa(\sigma_\fb z,s) = \delta_\fab y^s +\frac{\sqrt\pi \G(s-1/2)}{\G(s)}\varphi_\fab(s) y^{1-s} +\frac{2\pi^s \sqrt{y}}{\G(s)} \sum_{n\neq0} |n|^{s-1/2}K_{s-1/2}(2\pi|n|y)\varphi_\fab(n,s) e(nx),
$$
where $\delta_\frak{ab}=1$ if the cusps $\frak{a}$ and $\frak{b}$ are in the same $\Gamma$-orbit, and 0 otherwise, where $\G(s)$ denotes the classical $\G$-function, where $K_s(z)$ is the $K$-Bessel function (or, modified Bessel function of the second kind), where $e(z)=e^{2\pi i z}$, and where $\varphi_\fab(s)$ and $\varphi_\fab(n,s)$ are the Dirichlet series
\begin{align*}
\varphi_\fab(s) &= \sum_{c>0} c^{-2s} \#\{ d\in[0,c): \bsm *&*\\ c&d\esm\in\sigma_\fa^{-1}\G\sigma_\fb\}, \\ 
\varphi_\fab(n,s) &= \sum_{c>0} c^{-2s} \sum_{\substack{0\leq d<c\\ \bsm *&*\\ c&d\esm\in \sigma_\fa^{-1}\G\sigma_\fb}} e\left(n\frac{d}{c}\right).
\end{align*}
These Dirichlet series do not depend on a particular representative for the cusps $\fa$, $\fb$, but $\varphi_\fab(n,s)$ depends on the particular choice of the scaling transformation $\sigma_\fb$ up to a unitary multiplicative factor. Indeed, if we replace $\sigma_\fb$ by $\sigma_\fb n_x$, $n_x=\pm\bsm 1& x\\ 0&1\esm$, then $\varphi_\fab(n,s)$ is replaced by $\varphi_\fab(n,s)e(nx)$. Both Dirichlet series are absolutely convergent on $\re(s)>1$. 

On the vertical line $\re(s)=1$, the Eisenstein series are holomorphic, save for a simple pole at $s=1$. Examining the local behavior of its Fourier coefficients, we note that the $K$-Bessel function is entire as a function of $s$ --- in fact, at $s=1$, we have $2(|n|y)^{1/2} K_{1/2}(2\pi|n|y)e(nx) =  e^{-2\pi|n|y}e(nx)$, see \cite[p.~227]{Iwaniec} ---, and that the Dirichlet series $\varphi_\fab(n,s)$ are holomorphic along the vertical line $\re(s)=1$. At $s=1$, $\varphi_\fab(n,1)$ is real valued, hence $\varphi_\fab(n,1)=\varphi_\fab(-n,1)$, and satisfies the estimate $|\varphi_\fab(n,1)|\ll n^{1+\eps}$, for any $\eps>0$; see \cite[Theorem 1.1]{JorgensonO'Sullivan2005}. Finally, the Dirichlet series defining $\varphi_\fab(s)$ has a simple pole at $s=1$ of residue $(\pi V)^{-1}$, where $V$ denotes the hyperbolic volume of any fundamental domain for $\G$. Thus $\varphi_\fab(s)$ has a Laurent expansion at $s=1$ of the form
$$
\varphi_\fab(s) = \frac{(\pi V)^{-1}}{s-1} +\sum_{n\geq0} c_n (s-1)^n.
$$

\subsection{Kronecker's first limit formula}

An extension (see \cite[Theorem 3-1]{Goldstein1973}) of Kronecker's classical first limit formula gives the constant coefficient in the Laurent series for $E_\frak{a}(\sigma_\frak{b}z,s)$ at the simple pole at $s=1$. Using the Fourier expansion of the Eisenstein series and letting $s\to1^+$, we immediately obtain the corresponding formal Laurent series at $s=1$, given by
\begin{align*}
\lim_{s\to 1^+} \left(E_\frak{a}(\sigma_\frak{b}z,s) - \frac{V^{-1}}{s-1}\right)\ &=\ c_0 - V^{-1}\log y +\delta_\fab y +\sum_{n>0} \pi \varphi_\fab(n,1) e(nz) + \sum_{n<0} \pi \varphi_\fab(n,1) e(n\overline{z})\\
& =\ c_0 -V^{-1}\log y - \re\left(\delta_\fab iz - 2\pi \sum_{n>0} \varphi_\fab(n,1)e(nz)\right).
\end{align*}
We denote the term in parenthesis by $f_{\fa,\sigma_\fb}(z)$ and record that $f_{\fa,\gs_\fb n_x}(z)=f_{\fa,\gs_\fb}(n_x z) -\delta_\fab ix$. As a function of $z$, $f_{\fa,\gs_\fb}$ is holomorphic; this establishes the validity of the limit formula.

Specializing to $\G=\PSL_2(\Z)$, we recover the classical first limit formula of Kronecker as follows. We have
\begin{align*}
E(z,s) &= \frac12 \sum_{\substack{c,d\in\Z\\ (c,d)=1}} \frac{y^s}{|cz+d|^{2s}},\qquad \varphi(s) = \sum_{c\geq1} \frac{\phi(c)}{c^{2s}},\\
\varphi(n,s) &= \sum_{c\geq1} c^{-2s} \sum_{\substack{d=1\\ (c,d)=1}}^c e\left(n\frac{d}{c}\right) = \sum_{c\geq1} \frac{1}{c^{2s}}\sum_{\delta|(c,n)}\delta\mu\left(\frac{c}{\delta}\right) = \frac{1}{\zeta(2s)}\sum_{\delta|n} \frac{1}{\delta},
\end{align*}
where $\phi$ is Euler's totient function, and we have used Kluyven's identity for Ramanujan sums in the last line. Using that $\zeta(2)=\tfrac{\pi^2}{6}$, 
$$
f(z) = \frac{12}{\pi}\left(\frac{\pi iz}{12}-\sum_{m,n>0} \frac{q^{mn}}{m}\right),
$$
and the term in parenthesis is a branch of the logarithm of Dedekind's $\eta$-function, defined by \eqref{eta}.

\subsection{Dedekind and Rademacher symbols}
We introduce two auxiliary functions on $\G$; the inner automorphism $\tau_{\sigma_\fb}(\gamma)=\sigma_\fb^{-1}\gamma \sigma_\fb$, and the automorphic cocycle $j(\gamma,z)=cz+d$. The latter is a multiplicative cocycle: for any pair $\g_1,\g_2\in\G$, we have $j(\g_1\g_2,z)=j(\g_1,\g_2 z)j(\g_2,z)$. The limit formula derived above implies the relation
$$
\re f_{\fa,\gs_\fb}(\tau_\fb(\g)z) = V^{-1} \ln|j(\tau_\fb(\g),z)|^2 + \re f_{\fa,\gs_\fb}(z)
$$
for each $\g\in\G$. We choose $\log z$ to be the principal branch of logarithm, i.e., $\log z = \ln|z|+i\arg(z)$ with $\arg(z)\in(-\pi,\pi]$. Then $\log(-j(\g,z)^2)$ is well defined, and $\re\left( \sign(c)^2 \log(-j(\g,z)^2)\right) = \ln|j(\g,z)|^2$. %Moreover, we observe that for every $z\in\h$, the argument of $\tfrac{cz+d}{i\sign(c)}$ lies in the interval $(-\pi/2,\pi/2]$; hence,
%$$
%\log(-(cz+d)^2) = 2\log\left(\frac{cz+d}{i \sign(c)}\right) = 2\log(cz+d) - i\pi \sign(c).
%$$
Therefore, the holomorphic function
\begin{align}\label{F}
F_{\frak{a},\gs_\fb}(\g;z): = f_{\frak{a},\gs_\fb}(\tau_{\gs_\fb}(\gamma)z) -f_{\frak{a},\gs_\fb}(z) - V^{-1}\sign(c_{\tau_{\gs_\fb}(\g)})^2 \log\left(-j(\tau_{\gs_\fb}(\g),z)^2\right)
%\log\left(\frac{j(\tau_{\gs_\fb}(\g),z)}{i\sign(c_{\tau_{\gs_\fb}(\g)})}\right)
\end{align}
has trivial real part, and thus by the Open Mapping Theorem, it must be a constant function of $z$, i.e., $F_{\fa,\gs_\fb}(\g;z)=F_{\fa,\gs_\fb}(\g)$. As a result, it is also independent of the particular choice of the scaling transformation $\gs_\fb$, since
$$
F_{\fa,\gs_\fb n_x}(\g) = f_{\fa,\gs_b}(\tau_{\gs_\fb}(\g)n_x z) - f_{\fa,\gs_\fb}(n_x z)  -V^{-1}\sign(c_{\tau_{\gs_\fb}(\g)})^2\log\left(-j(\tau_{\gs_\fb}(\g),n_x z)^2\right) = F_{\fa,\gs_\fb}(\g).
$$
We henceforth denote this function $F_\fab$. We can now define the Dedekind and Rademacher symbols attached to the cusp $\fa$. Set
\begin{align*}
\Phi_\frak{ab}(\gamma) &\coloneqq-i F_\frak{ab}(\gamma),\\
 \Phi_\frak{a}(\gamma) &\coloneqq\Phi_\frak{aa}(\gamma),\\
\Psi_\frak{a}(\gamma) &\coloneqq  \Phi_\frak{a}(\gamma) - \pi V^{-1}\sign( c(a+d)),
\end{align*}
with $\bsm a&b\\ c&d\esm=\tau_{\gs_\fa}(\g)$.
%\sigma_\frak{a}^{-1}\gamma\sigma_\frak{a}$
When $\Gamma=\PSL(2,\Z)$, we recover the classical Dedekind and Rademacher symbols $\Phi$ and $\Psi$; see Chapters 4A-C in \cite{Rademacher}. A detailed study of the properties of (a slight modification of) the Dedekind symbols $\Phi_\fab$ appears in \cite{JorgensonO'SullivanSmajlovic2020}. We only record here that, following the analysis in pp.~52--53 of \cite{Rademacher}, 
\begin{align}\label{qm}
\Phi_\fa(\g_1\g_2) = \Phi_\fa(\g_1) + \Phi_\fa(\g_2) - \pi V^{-1}\sign(c_1 c_2 c_3)
\end{align}
for any $\g_1$, $\g_2\in\G$, with $\bsm *&*\\ c_1&*\esm=\tau_{\gs_\fa}(\g_1)$, $\bsm *&*\\ c_2&*\esm=\tau_{\gs_\fa}(\g_2)$, and $\bsm *&*\\ c_3&*\esm=\tau_{\gs_\fa}(\g_1\g_2)$. The following relations follow immediately: 
$\Phi_\fa(I) = \Psi_\fa(I) = 0$, $\Phi_\fa(-\g) = \Phi_\fa(\g)$, $\Psi_\fa(-\g) = \Psi_\fa(\g)$, 
$\Phi_\fa(\g^{-1}) = -\Psi_\fa(\g)$, $\Psi_\fa(\g^{-1})=-\Psi_\fa(\g).$

\begin{prop}\label{prop:3}
Rademacher symbols are rational on elliptic and parabolic motions. 
\end{prop}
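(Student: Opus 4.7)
The plan is first to reduce rationality of $\Psi_\fa$ to rationality of the Dedekind symbol $\Phi_\fa$. By the Gauss--Bonnet theorem, $V = 2\pi\bigl(2g - 2 + h + \sum_{j=1}^r (1 - 1/m_j)\bigr)$ for the signature $(g;m_1,\ldots,m_r;h)$ of $\G$, so $\pi V^{-1} \in \Q$. Since $\Psi_\fa(\g) - \Phi_\fa(\g) = -\pi V^{-1}\sign(c(a+d))$, it is enough to establish rationality of $\Phi_\fa$ on elliptic and parabolic elements.

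For an elliptic $\g$ of order $n$, iterating the quasi-morphism identity \eqref{qm} along the relation $\g^n = I$ yields
\[
0 \;=\; \Phi_\fa(\g^n) \;=\; n\,\Phi_\fa(\g) \;-\; \pi V^{-1}\sum_{k=2}^{n} \sign(c_1 c_{k-1} c_k),
\]
where $c_k$ denotes the bottom-left entry of $\tau_{\gs_\fa}(\g^k)$ and the $k=n$ term vanishes as $c_n=0$. The sign sum is an integer, so $\Phi_\fa(\g) \in \pi V^{-1}\Q \subset \Q$.

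For a parabolic $\g$, express $\g = \delta\g_{\fa_i}^m\delta^{-1}$ for some $\delta \in \G$, $m \in \Z$, and some cusp representative $\fa_i$. Using $\Phi_\fa(\delta^{-1}) = -\Phi_\fa(\delta)$ together with iterated applications of \eqref{qm}, both the conjugation by $\delta$ and the $m$-fold iteration alter $\Phi_\fa$ only by rational multiples of $\pi V^{-1}$. The problem therefore reduces to verifying $\Phi_\fa(\g_{\fa_i}) \in \Q$ for each of the $h$ cusp generators. When $\fa_i$ is $\G$-equivalent to $\fa$, the matrix $\tau_{\gs_\fa}(\g_\fa)$ equals $n_1$, the logarithmic term in the definition of $F_{\fa,\gs_\fa}(\g_\fa;z)$ vanishes, and the Kronecker limit formula gives $f_{\fa,\gs_\fa}(z+1) - f_{\fa,\gs_\fa}(z) = i$; hence $\Phi_\fa(\g_\fa) = 1$.

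The main obstacle is the remaining case where $\fa_i$ is not $\G$-equivalent to $\fa$: then $\tau_{\gs_\fa}(\g_{\fa_i})$ is a parabolic matrix with nonzero bottom-left entry and the direct Kronecker computation no longer applies. My plan is to compare $F_{\fa,\gs_\fa}(\g_{\fa_i})$ to the auxiliary constant $F_{\fa,\gs_{\fa_i}}(\g_{\fa_i};z)$, which by the same direct argument but with the scaling $\gs_{\fa_i}$ equals $i\,\delta_{\fa\fa_i} = 0$. The key remaining step is to show that the difference $F_{\fa,\gs_\fa}(\g_{\fa_i}) - F_{\fa,\gs_{\fa_i}}(\g_{\fa_i})$ lies in $i\pi V^{-1}\Q$, which I expect to follow by tracing the $\G$-invariance of the regularized Eisenstein value $\lim_{s\to 1^+}\bigl(E_\fa(\zeta,s) - V^{-1}/(s-1)\bigr)$ through the two scalings $\zeta = \gs_\fa z$ and $\zeta = \gs_{\fa_i} z$; this change-of-scaling bookkeeping is where I foresee the bulk of the technical work.
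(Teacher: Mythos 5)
Your Gauss--Bonnet reduction from $\Psi_\fa$ to $\Phi_\fa$ and your elliptic argument coincide with the paper's proof (the paper iterates \eqref{qm} into exactly your recursion, cf.\ \eqref{recursion}), and your identity $\Phi_{\fa\fa_i}(\g_{\fa_i})=\delta_{\fa\fa_i}$ computed with the scaling $\gs_{\fa_i}$ is also the paper's starting point for the parabolic case. The genuine gap is the step you yourself flag and defer: you never prove that $F_{\fa,\gs_\fa}(\g_{\fa_i})-F_{\fa,\gs_{\fa_i}}(\g_{\fa_i})$ lies in $i\pi V^{-1}\Q$, and that comparison \emph{is} the substance of the parabolic case --- without it, rationality is unproved precisely for the parabolic elements whose fixed cusp is inequivalent to $\fa$, which is the only nontrivial situation. (A side remark: the conjugation $\g=\delta\g_{\fa_i}^m\delta^{-1}$ is harmless but unnecessary; since $f_{\fa,\gs_\fb}$ and $F_{\fa,\gs_\fb}$ make sense for an arbitrary cusp $\fb$, one can work directly with the cusp $\fb$ fixed by $\g$, as the paper does.)

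To close the gap along the lines you anticipate: from $E_\fa(\gs_\fa z,s)=E_\fa(\gs_\fb z',s)$ with $z'=\gs_\fb^{-1}\gs_\fa z$, the Kronecker limit formula yields
$$
f_{\fa,\gs_\fb}(z')-f_{\fa,\gs_\fa}(z)=V^{-1}\log\left(-j(\gs_\fb^{-1}\gs_\fa,z)^2\right)+\text{constant}.
$$
Substituting this into the definition \eqref{F} of $F_{\fa,\gs_\fa}(\g;z)$, the additive constant cancels between the evaluations at $\tau_{\gs_\fa}(\g)z$ and at $z$, and what remains is $F_{\fa,\gs_\fb}(\g)$ plus a combination of logarithms of the cocycle factors $j(\gs_\fb^{-1}\gs_\fa,\tau_{\gs_\fa}(\g)z)$, $j(\tau_{\gs_\fa}(\g),z)$, and $j(\gs_\fb^{-1}\gs_\fa,z)$. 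The multiplicative cocycle identity for $j$ makes the corresponding product of arguments collapse, so the failure of $\log$ to be additive contributes an element of $2\pi i\Z$, while converting $\log(-w^2)$ into $2\log w$ contributes an element of $i\pi\Z$; the total discrepancy is therefore of the form $2iV^{-1}\left(2\pi iB+\tfrac{i\pi}{2}A\right)$ with $A,B\in\Z$, a rational multiple of $\pi V^{-1}\in\Q$. This is precisely the paper's chain of equalities ending in $\Phi_\fa(\g)=\delta_\fab\cdot m+2iV^{-1}\left(2\pi iB_\fab(\g)+\tfrac{i\pi}{2}A_\fab(\g)\right)$: the ``change-of-scaling bookkeeping'' you foresee is not a routine verification to be postponed, it is the proof.
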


\begin{proof}
Let $\g$ be a parabolic element. Recall that $\G_\fb$ is an infinite cyclic group and denote its generator by $\g_\fb$. Then $\g=\g_\fb^m$ for some $m\in\N$.  We first observe that $\Phi_\fab(\g)= -i\left( f_{\fa,\gs_\fb}(z+m) - f_{\fa,\gs_\fb}(z)\right) = \delta_\fab \cdot m$. Hence $\Phi_\fb$ is an integer on $\G_\fb$. Suppose now that $\fa$ and $\fb$ are inequivalent cusps. Since $E_\fa(\gs_\fa z,s) = E_\fa(\gs_\fb z',s)$ with $z'=\gs_\fb^{-1}\gs_\fa z$, the Kronecker limit formula implies the relation
$$
f_{\fa,\gs_\fb}(z')-f_{\fa,\gs_\fa}(z) = V^{-1}\log\left(-j(\gs_\fb^{-1}\gs_\fa,z)^2\right) + \text{constant}.
$$
Then
\begin{align*}
\Phi_\fa(\g) &= -i\left( f_{\fa,\gs_\fa}(\tau_{\gs_\fa}(\g)z) - f_{\fa,\gs_\fa}(z) - V^{-1}\log\left(-j(\tau_{\gs_\fa}(\g),z)^2\right)\right)\\
&=
-i\left( f_{\fa,\gs_\fb}(z'+m)-V^{-1}\log(-j(\gs_\fb^{-1}\gs_\fa,\tau_{\gs_\fa}(\g_\fb)z)^2\right) - f_{\fa,\gs_\fb}(z')\\
& \qquad  + V^{-1} \log\left(-j(\gs_\fb^{-1}\gs_\fa,z)^2\right) - V^{-1}\log\left(-j(\tau_{\gs_\fa}(\g),z)^2\right))\\
& =
\Phi_\fab(\g) +2iV^{-1}\left( \log j(\gs_\fb^{-1}\gs_\fa,\tau_{\gs_\fa}(\g)z) + \log j(\tau_{\gs_\fa}(\g),z) - \log j(\gs_\fb^{-1}\gs_\fa,z) + i\frac{\pi}{2} A_{\fab}(\g)\right)\\
& = 
\delta_\fab\cdot m +2iV^{-1}\left(2\pi i B_\fab(\g)+i\frac{\pi}{2}A_\fab(\g)\right),
\end{align*}
where $A_\fab$ and $B_\fab$ are integer-valued. %We conclude by recalling that $V\in \pi\Q$.
Looking at the Gauss--Bonnet formula for $V$ (see, e.g., \cite[p.~46]{Iwaniec}), we have $\pi V^{-1}\in\Q$ and this concludes. Let now $\g$ be an elliptic element of order $m$. Applying \eqref{qm} recursively, we have
\begin{align}\label{recursion}
0 = \Phi_\fa(\g^m) & = m\Phi_\fa(\g) - \pi V^{-1} \sum_{k=1}^{m-1} \sign( c_\g c_{\g^{k}} c_{\g^{k+1}}) 
\end{align}
and this shows that $\Phi_\fa(\g)\in\Q$. Since $\G$ is finitely generated and the transformation formula \eqref{qm} for $\Phi_\fa$ depends only on the rational constant $\pi V^{-1}$, we conclude that genus 0 noncocompact Fuchsian groups have rational-valued Rademacher symbols.
\end{proof}

For certain congruence subgroups, it is possible to deduce an explicit expression for $\Phi_\fa$, following the method of proof of Dedekind. For example, using the Fourier expansion of the Eisenstein series for $\G(N)$ and the cusp at $\infty$, the function $f:=f_{\infty,\sigma_\infty}$, $(\sigma_\infty=I$), arising from the Kronecker limit formula is explicitly given by \cite[Theorem 1]{Takada1986}
$$
f(z) = iz + 4V^{-1}_{\G(N)}\sum_{j=0}^{N-1}\log \prod_{\substack{m\geq1\\ m\equiv j (N)}} (1-q^{mN})
$$
and (following the method of Dedekind) the corresponding Dedekind symbol is\footnote{ We have the following dictionary between our notation and Takada's: $\pi\alpha_N = V^{-1}_{\G(N)}$, $f= 4\pi\alpha_N \log\eta_N$
, $\Phi(\g) = 4\pi \alpha_N(S_N(\g)-\tfrac14 \sign(c))$ (for $\g=\bsm *&*\\ c&*\esm$ with $c\neq0$).} \cite[Theorem 2]{Takada1986}
$$
\Phi^{\G(N)}_\infty(\g) = \begin{dcases} \frac{b}{d} & \text{ if } c=0, \\
\frac{a+d}{c} - \frac{4\pi V_{\G(N)}^{-1}}{|c|N}\sum_{j=1}^{|c|N-1} j C_{N,j} \left(\!\left(\frac{Naj}{c}\right)\!\right) & \text{ if } c\neq0,\end{dcases}
$$
where $(\!(x)\!)= x - \lfloor x\rfloor -\tfrac12$ if $x\not\in\Z$ and 0 otherwise, and
$$
C_{N,j} = \frac{\pi^2}{6}\prod_{p\mid N}(1-p^{-2})\sum_{\substack{a=1\\ (a,N)=1}}^N \sum_{\substack{n\geq1\\ na\equiv 1 (N)}} \frac{\mu(n)}{n^2} \cos\frac{2\pi aj}{N}.
$$

If $N=2$, then $C_{2,j}=\cos(\pi j)\in\{\pm 1\}$ and the Dedekind symbol is rational-valued. If $N>2$, Takada shows that $C_{N,j}$ lies in the cyclotomic field $\Q(\zeta_N,\zeta_{\phi(N)})$, where $\phi(N)$ is the Euler totient function. This implies that $\Phi^{\G(N)}_\infty$ --- and in turns $\Psi^{\G(N)}_\infty$ --- are rational-valued for all $N$'s.

\section{{\bf Proof of Theorem 1}}\label{sec:proof}

For each cusp $\frak{a}$, consider the modified Eisenstein series
\begin{align*}
E_{2,\frak{a}}(z,s) \coloneqq\ 2i\frac{\partial}{\partial z} E_\frak{a}(z,s) = \left(\frac{\partial}{\partial y} +i\frac{\partial}{\partial x}\right) E_\frak{a}(z,s).
\end{align*}
Since the residue of $E_\frak{a}(z,s)$ at the simple pole at $s=1$ is constant in $z$, this modified Eisenstein series is regular at $s=1$, and we set $E_{2,\frak{a}}(z)\coloneqq E_{2,\frak{a}}(z,1)$. Explicitly, we have
\begin{align*}
E_{2,\fa}(z) 
%&= \lim_{s\to1^+} \sum_{\g\in\G_\infty\bk\G} \left(\ssy+i\ddx\right)E_\fa(z,s)\\
%&= \lim_{s\to1^+} \sum \frac{sy^{s-1}}{|cz+d|^{2s-2}}   \left(\ssy+i\ddx\right) \frac{y}{(cx+d)^2+(cy)^2}\\
%&=  \lim_{s\to1^+}\sum \frac{sy^{s-1}(c\overline{z}+d)^2}{|cz+d|^{2s}}\\
&= \lim_{s\to1^+} sy^{s-1} \sum_{\g\in\G_\fa\bk\G} j(\gs_\fa^{-1}\g,z)^{-2}|j(\gs_\fa^{-1}\g,z)|^{-2s+2}.
\end{align*} 
Hence this is a real-analytic Eisenstein series of weight 2; for principal congruence groups, this recovers Hecke's construction.
 The Kronecker limit formula for $E_\fa(\gs_\fb z,s)$ yields the Fourier expansion of $E_{2,\fa}(\gs_\fb z)$, which is  given by
\begin{align*}
E_{2,\fa}(\gs_\fb z) \gs_\fb'(z) &= 2i \ddz\left( c_0 - V^{-1}\log y +\delta_\fab y +\sum_{n>0} \pi \varphi_\fab(n,1)e(nz) +\sum_{n<0}\pi \varphi_\fab (n,1)e(n\overline{z})\right)\\
&= -\frac{V^{-1}}{y} +\delta_\fab -4\pi^2\sum_{n>0}n\varphi_\fab(n,1)e(nz)\\
& = 
-\frac{V^{-1}}{y}+\frac{1}{i}\frac{d}{dz}f_{\fa,\gs_\fb}(z).
\end{align*}

The relation \eqref{qm} implies that $\Psi_\fa(\g^{-1}) = -\Psi_\fa(\g)$ and $\Psi_\fa(-\g)=\Psi_\fa(\g)$. Thus up to replacing $\g$ by $\pm\g^{\pm1}$, we may assume that for $\g=\bsm a&b \\c &d\esm$, we have $c>0$ and $a+d>0$.

\begin{lm}\label{lemma}
Let $\g\in\G$ be a hyperbolic element of positive trace. Then
$$
\int_{\g} E_{2,\fa}(z)dz = \Psi_\fa(\g).
$$
\end{lm}

\begin{proof}
Fix a base point $z_0\in\h$ on the geodesic axis of $\g$. Using the above Fourier expansion,
\begin{align*}
\int_{z_0}^{\g z_0} E_{2,\fa}(z) dz &= \int_{\gs^{-1}_\fa z_0}^{\gs^{-1}_\fa \g z_0} E_{2,\fa}(\gs_\fa z) \gs_\fa'(z) dz\\
&= -V^{-1} \int_{\gs^{-1}_\fa z_0}^{\gs^{-1}_\fa \g z_0} \frac{dz}{y} +\frac{1}{i}\left( f_{\fa,\gs_\fa}(\tau_{\gs_\fa}(\g)\gs_\fa^{-1}z_0)-f_{\fa,\gs_\fa}(\gs_\fa^{-1}z_0)\right).
\end{align*}
For any $g=\bsm *&*\\c&d\esm\in\SL_2(\R)$, we have 
$$
\frac{dz}{y}\circ g - \frac{dz}{y} = \frac{|cz+d|^2-(cz+d)^2}{(cz+d)^2 y}dz = -\frac{2ic}{cz+d} dz = -2i\ d \log j(g,z)
$$
and this shows that 
$$
\int_{\gs_\fa^{-1}z_0}^{\tau_{\gs_\fa}(\g)\gs_\fa^{-1}z_0} \frac{dz}{y} = -2i \log j(\tau_{\gs_\fa}(\g),\gs_\fa^{-1}z_0).
$$
On the other hand,
\begin{align*}
%i\Phi_\fa(\g) + V^{-1} \log\left(-j(\tau_{\gs_\fa}(\g),\gs_\fa^{-1}(z_0))^2\right) = 
f_{\fa,\gs_\fa}(\tau_{\gs_\fa}(\g)\gs_\fa^{-1}z_0)-f_{\fa,\gs_\fa}(\gs_\fa^{-1}z_0) =
i\Phi_\fa(\g)+V^{-1} \sign(c_{\tau_{\gs_\fa}(\g)})^2 \log\left(-j(\tau_{\gs_\fa}(\g),\gs_\fa^{-1}z_0)^2\right)
\end{align*}
Since $\tr(\g)>2$, we have $c_{\tau_{\gs_\fa}(\g)}\neq0$ and
\begin{align*}
\log\left(-j(\tau_{\gs_\fa}(\g),\gs_\fa^{-1}z_0)^2\right) &= 2\log\left(\frac{j(\tau_{\gs_\fa}(\g),\gs_\fa^{-1}z_0)}{i \sign(c_{\tau_{\gs_\fa}(\g)})}\right)= 2\log j(\tau_{\gs_\fa}(\g),\gs_\fa^{-1}z_0)
%\log(c_{\tau_{\gs_\fa}(\g)}\gs_\fa^{-1}z_0+d_{\tau_{\gs_\fa}(\g)}) 
- i\pi \sign(c_{\tau_{\gs_\fa}(\g)}).
\end{align*}
We conclude that 
$$
\int_{z_0}^{\g z_0} E_{2,\fa}(z)dz = \Phi_\fa(\g) - \pi V^{-1}\sign(c_{\tau_{\gs_\fa}(\g)}),
$$
which coincides with the definition of the Rademacher symbol $\Psi_\fa(\g)$ for $\g$ of positive trace.
\end{proof}

Let $\fa_1,\fa_2,\dots,\fa_h$ denote the inequivalent cusps of $\Gamma$. For each integer tuple ${\bf m}=(m_1,\dots, m_h)$ such that 
$
\sum m_i = 0,
$
define
$$
E_{2,\fm}(z) \coloneqq \sum_{i=1}^{h} m_i E_{2,\frak{a}_i}(z).
$$
Then $E_{2,\fm}(z)$ has the Fourier expansion
$$
E_{2,\fm}(\sigma_\frak{b} z)\gs'_\fb(z)
%j(\sigma_\frak{b},z)^{-2} 
= m_\fb -4\pi^2\sum_{n\geq1} n \left( \sum_{i=1}^h m_i\varphi_{\fa_i \fb}(n) \right)e(nz),
$$
from which it can be seen that $E_{2,\fm}(z)$ is a holomorphic modular form of weight 2. In particular, $E_{2,\fm}$ induces a holomorphic differential on $X$ and we conclude our proof of \thmref{thm:1} with the following result of Scholl; see Proposition 2 in \cite{Scholl1986}.

\begin{prop} Let ${\bf m}=(m_1,\dots, m_h)$. In the notation above,
$$
2\pi i E_{2,\fm}(z)dz =\omega_D
$$
which is the canonical differential of the third kind associated to the residue divisor $D=\sum m_i(\frak{a}_i).$
\end{prop}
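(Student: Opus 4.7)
\emph{Plan.} The proposition asserts three things about the form $\omega := 2\pi i E_{2,\mathbf{m}}(z)\,dz$: that it descends to a meromorphic differential on $X$, that its residue divisor equals $D$, and that it is the canonical representative in the sense of Theorem~5.3 of \cite{LangAbelian}. I would verify these three points in order, invoking \lemref{lemma} only at the final step.

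First, I would check that $\omega$ defines a holomorphic section of $\Omega^1_X$ away from the cusps. Since $E_\mathfrak{a}(\gamma z,s) = E_\mathfrak{a}(z,s)$ and $z \mapsto \gamma z$ is holomorphic, the chain rule applied to $\partial_z$ gives the weight-$2$ transformation law $E_{2,\mathfrak{a}}(\gamma z) = (cz+d)^2 E_{2,\mathfrak{a}}(z)$ for every $\gamma = \bsm a&b\\c&d\esm \in \Gamma$, so $E_{2,\mathfrak{a}_i}\,dz$, and hence $E_{2,\mathbf{m}}\,dz$, is $\Gamma$-invariant. The only non-holomorphic contribution of each $E_{2,\mathfrak{a}_i}$ is the $-V^{-1}/y$ term present in every cuspidal Fourier expansion (the $\partial_z$-image of the $-V^{-1}\log y$ from the Kronecker limit formula); these terms cancel in $E_{2,\mathbf{m}}$ precisely because $\sum_i m_i = 0$.

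Second, I would read off the residues from the Fourier expansion displayed just above. In the local uniformizer $q_\mathfrak{b} = e^{2\pi i \sigma_\mathfrak{b}^{-1} z}$ at the cusp $\mathfrak{b}$, the pullback of $\omega$ reads $m_\mathfrak{b}\,\tfrac{dq_\mathfrak{b}}{q_\mathfrak{b}} + (\text{power series in } q_\mathfrak{b})\,dq_\mathfrak{b}$, so $\omega$ has simple poles only at the cusps with residue $m_i$ at $\mathfrak{a}_i$. Hence $\omega$ is a differential of the third kind with residue divisor $D$.

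Third, to identify $\omega$ with $\omega_D$, I would verify the normalization condition distinguishing the canonical representative among all third-kind differentials with residue divisor $D$. By \lemref{lemma}, each hyperbolic period of $\omega$ equals $2\pi i \sum_i m_i \Psi_{\mathfrak{a}_i}(\gamma) \in 2\pi i\mathbb{R}$, and the period along a small positively oriented loop at $\mathfrak{a}_i$ equals $2\pi i m_i \in 2\pi i\mathbb{R}$. These cycles together generate $H_1(X \setminus \{\mathfrak{a}_1,\dots,\mathfrak{a}_h\},\mathbb{Z})$, so every period of $\omega$ lies in $2\pi i\mathbb{R}$, which is the condition pinning down the canonical third-kind differential in Lang's formulation. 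Uniqueness then forces $\omega = \omega_D$. The main obstacle is a bookkeeping one at this last step: different sources normalize the canonical representative differently (purely imaginary periods, vanishing $A$-periods, or orthogonality to a real basis of holomorphic forms), and one has to confirm that the $2\pi i\mathbb{R}$-valuedness produced by \lemref{lemma} matches the condition used in \cite{LangAbelian}. Once the convention is aligned, the first two steps reduce to direct reading of the Fourier expansion already at hand.
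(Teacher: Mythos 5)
Your argument is correct, but it is worth noting that the paper does not prove this proposition at all: it is quoted as a result of Scholl (Proposition 2 of \cite{Scholl1986}), so your proposal supplies a self-contained proof where the paper relies on a citation. Your three steps are the right ones and they all check out against the material already developed in the paper: the weight-$2$ transformation law follows from $\Gamma$-invariance of $E_{\mathfrak{a}}(z,s)$ by the chain rule; the non-holomorphic term $-V^{-1}/y$ is the same at every cusp and cancels exactly because $\sum_i m_i=0$ (this is the paper's own justification that $E_{2,\mathbf{m}}$ is holomorphic of weight $2$); and the constant term $m_{\mathfrak{b}}$ of the Fourier expansion converts, via $dz=\tfrac{dq_{\mathfrak{b}}}{2\pi i q_{\mathfrak{b}}}$, into residue $m_{\mathfrak{b}}$ for $2\pi i E_{2,\mathbf{m}}\,dz$. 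For the normalization step, the convention implicit in the paper's introduction (and in Theorem 5.3 of \cite{LangAbelian}) is indeed that $\omega_D$ is the unique third-kind differential with residue divisor $D$ all of whose periods are purely imaginary, so your verification via \lemref{lemma} (hyperbolic periods equal $2\pi i\sum_i m_i\Psi_{\mathfrak{a}_i}(\gamma)$ with $\Psi_{\mathfrak{a}_i}$ real-valued) together with the residue computation at the cusps does pin down $\omega=\omega_D$; uniqueness holds because a holomorphic differential with purely imaginary periods vanishes. The only point I would tighten is the claim that hyperbolic loops and cusp loops generate $H_1$ of the punctured surface: the cleaner statement is that the period map factors through the homomorphism $\gamma\mapsto\int_{z_0}^{\gamma z_0}\omega$ on $\Gamma$, and every element of $\Gamma$ is elliptic (finite order, hence period $0$), parabolic (period $2\pi i k m_{\mathfrak{b}}$), or hyperbolic, so all periods are accounted for without worrying about orbifold points. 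There is also no circularity in invoking \lemref{lemma} here, since that lemma is proved directly from the Kronecker limit formula without reference to $\omega_D$.
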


Let 
$$
I_{\bf m}(\g) := 2\pi i \int_{z_0}^{\g z_0} E_{2,{\bf m}}(z)dz
$$
and note that the integral does not depend on the particular choice of $z_0\in\h$. In particular, $I_{{\bf m}}:\G\to\R$ is a group homomorphism and $I_{{\bf m}}(\g)=0$ if $\g$ is elliptic. Considering the Fourier expansion of $E_{2,{\bf m}}$, we see that if $\g=\g_\fb^k$, where $\g_\fb$ denotes the generator of the parabolic subgroup $\G_\fb$, then $I_{\bf m}(\g) = k\cdot m_\fb$. We conclude that the periods of $\omega_D$ are in $2\pi i\Q$ if and only if all Rademacher symbols for $\G$ are rational-valued on hyperbolic elements.

\section{{\bf Proof of Theorem 2}}\label{sec:rationality}

\begin{lm}\label{lm:simplification}
If $\G_1<\G$ is a subgroup of finite index, then each Rademacher symbol $\Psi_\fa$ is rational if its restriction to $\G_1$, i.e., $\Psi_\fa\vert_{\G_1}$ is rational on hyperbolic elements.
\end{lm}

\begin{proof}
By (\ref{qm}), we have that for any $\g_1$, $\g_2\in \G$,
$$
\pi^{-1} V\Psi_\fa(\g_1\g_2) \equiv \pi^{-1} V(\Psi_\fa(\g_1) +\Psi_\fa(\g_2))\qquad (\text{mod }1),
$$
where we recall that $\pi^{-1} V\in\Q$. We write $\G=\cup_\tau \G_1\tau$ as a disjoint union of $\G_1$-cosets. Since $\G_1$ has finite index, there is some $n\in\N$ such that $\tau^n\in\G_1$. By the recursion formula (\ref{recursion}),
$$
\pi^{-1} V \Psi_\fa(\tau^n) \equiv n\pi^{-1} V \Psi_\fa(\tau)\qquad (\text{mod }1).
$$
Each $\g\in\G$ can be written as $\g=\g_1\tau$ for some $\g_1\in\G_1$ and some coset representative $\tau$, so that
$$
n\pi V^{-1} \Psi_\fa(\gamma) \equiv \pi V^{-1}(n\Psi_\fa(\g_1) + \Psi_\fa(\tau^n))\qquad (\text{mod }1).
$$
By \propref{prop:3}, Rademacher symbols are always rational on parabolic and elliptic elements, hence it suffices to check that $\Psi_\fa\vert_{\G_1}$ is rational on hyperbolic elements.
\end{proof}

It immediately follows that Rademacher symbols on genus 0 Fuchsian groups are rational. To obtain new classes of examples among Fuchsian groups of genus $g\geq1$, we build on the following relations along normal covers.

\begin{prop}\label{proposition}
Let $\G_1<\G$ be a normal subgroup of finite index, and let $\fa$ be a cusp of $\G_1$ with stabilizer subgroup $\G_{1,\fa}=\G_1\cap\G_\fa$. We denote the Rademacher symbols for the cusp at $\fa$ on $\G$ and $\G_1$ by $\Psi^\G_\fa$ and $\Psi^{\G_1}_\fa$ respectively. For each hyperbolic element $\gamma\in\Gamma_1$ of positive trace,
$$
\Psi_\fa^\G (\gamma) = \Psi^\G_\fa \vert_{\G_1}(\g) = \sum_{\tau\in\Gamma_1\backslash\Gamma} \Psi_\fa^{\Gamma_1}(\tau\gamma\tau^{-1}),
$$
%Since $\Psi_\fa^{\G_1}$ is conjugacy class invariant on $\G_1$ for hyperbolic elements, 
and this formula does not depend on the particular choice of coset representatives. Moreover, if $\fa$ and $\fb$ are cusps of $\G_1$ that are $\G$-equivalent --- i.e., if there is some $\tau\in\G$ such that $\tau\fa=\fb$ --- then for each hyperbolic element $\g\in\G_1$ of positive trace,
$$
\Psi_\fa^{\G_1}(\gamma) = \Psi_\fb^{\G_1}(\tau\gamma\tau^{-1}).
$$
\end{prop}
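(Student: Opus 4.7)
The plan is to reduce the proposition to the period-integral interpretation of \lemref{lemma} and to establish the weight-$2$ decomposition
$$
E_{2,\fa}^\G(w) \;=\; \sum_{\tau \in \G_1\backslash\G} E_{2,\fa}^{\G_1}(\tau w)\,\tau'(w).
$$
The first step is to prove the single-cusp identity $E_\fa^{\G_1}(\tau w, s) = E_{\tau^{-1}\fa}^{\G_1}(w, s)$ for every $\tau \in \G$, directly from the defining series by the substitution $\gamma \mapsto \tau^{-1}\gamma\tau$; this is legitimate because normality of $\G_1$ keeps the substitution inside $\G_1$ and $\tau^{-1}\G_{1,\fa}\tau = \G_{1,\tau^{-1}\fa}$, with scaling $\tilde\sigma_{\tau^{-1}\fa} := \tau^{-1}\tilde\sigma_\fa$ correctly width-normalized. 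Summing over $\tau \in \G_1\backslash\G$ and reorganizing the resulting double sum using $\G = \bigsqcup_\tau \G_1\tau$, each $\G_\fa$-coset in $\G$ breaks into $n_2 := [\G_\fa : \G_{1,\fa}]$ many $\G_{1,\fa}$-cosets, each contributing the same value of $\im(\sigma_\fa^{-1}\gamma w)^s$. Accounting for the ratio of cusp widths that relates the width-normalized scaling for $\G_1$ to the one for $\G$ produces
$$
\sum_{\tau \in \G_1\backslash\G} E_\fa^{\G_1}(\tau w, s) \;=\; n_2^{1-s}\, E_\fa^\G(w, s).
$$
Both sides have simple poles at $s=1$ with matching residues (consistent with $V_{\G_1} = [\G:\G_1]\,V_\G$); the operator $2i\partial_w$ cancels the $w$-independent residue, and at $s=1$ one has $n_2^{1-s} = 1$, yielding the displayed decomposition.

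With the decomposition in hand, integrating along hyperbolic $\gamma \in \G_1$ of positive trace and invoking \lemref{lemma} gives the summation formula: the LHS becomes $\Psi_\fa^\G(\gamma)$, while each summand on the RHS transforms, after the change of variable $z = \tau w$, into
$$
\int_{\tau w_0}^{\tau\gamma\tau^{-1}(\tau w_0)} E_{2,\fa}^{\G_1}(z)\,dz \;=\; \Psi_\fa^{\G_1}(\tau\gamma\tau^{-1}),
$$
which is legitimate because $\tau\gamma\tau^{-1}$ remains in $\G_1$ by normality and is hyperbolic with positive trace since conjugation preserves trace. For the equivariance statement with $\tau\fa = \fb$, the single-cusp identity of the first step rearranges to $E_{2,\fa}^{\G_1}(w)\,dw = E_{2,\fb}^{\G_1}(\tau w)\tau'(w)\,dw$; integrating along $\gamma$ and applying the substitution $z = \tau w$ gives $\Psi_\fa^{\G_1}(\gamma) = \Psi_\fb^{\G_1}(\tau\gamma\tau^{-1})$. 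Independence of coset representatives is the specialization of this equivariance to $\tau \in \G_1$: then $\fb = \tau\fa$ is $\G_1$-equivalent to $\fa$, hence $E_{2,\fa}^{\G_1} = E_{2,\fb}^{\G_1}$ and $\Psi_\fa^{\G_1} = \Psi_\fb^{\G_1}$, yielding $\G_1$-conjugation invariance of $\Psi_\fa^{\G_1}$ on hyperbolic elements; replacing $\tau$ by $\gamma_1\tau$ for $\gamma_1 \in \G_1$ then conjugates $\tau\gamma\tau^{-1}$ by $\gamma_1$ and leaves each summand fixed.

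The main technical point to watch is the bookkeeping of cusp widths. The cusp $\fa$ has generally different widths as a cusp of $\G$ and of $\G_1$, so the two Eisenstein series are built from different normalizing scaling matrices; this is precisely what produces the factor $n_2^{1-s}$ and makes its triviality at $s=1$, after differentiation in $w$, the crux of the argument. Once this normalization is pinned down, the remainder is mechanical manipulation of series and integrals via the substitution $z = \tau w$.
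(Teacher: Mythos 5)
Your proposal is correct and follows essentially the same route as the paper: decompose the weight-$2$ Eisenstein series for $\G$ over $\G_1$-cosets, integrate along the closed geodesic, and invoke \lemref{lemma}, with the second claim and the coset-independence both reduced to the identity $E_{2,\fa}^{\G_1}(w)=E_{2,\fb}^{\G_1}(\tau w)\tau'(w)$ for $\tau\fa=\fb$. If anything, you are more explicit than the paper about the cusp-width bookkeeping (the factor $n_2^{1-s}$ and its disappearance at $s=1$ after differentiating), which the paper absorbs into a ``formal equality'' of weight-$2$ sums.
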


\begin{proof}
Writing $\G=\cup_\tau \G_1\tau$ as a finite disjoint union of $\G_1$-cosets, we have the formal equality
$$
\sum_{\g\in\G_\fa\backslash\G} j(\gs_\fa^{-1}\g,z)^{-2} = \sum_\tau \sum_{\g\in\G_{1,\fa}\backslash\G_1} j(\gs_\fa^{-1}\g\tau,z)^{-2} =  \sum_\tau j(\tau,z)^{-2} \sum_{\g\in\G_{1,\fa}\backslash\G_1} j(\gs_\fa^{-1}\g,\tau z)^{-2}.
$$ 
Notice that this equality does not depend on the particular choice of coset representatives. Hence for each hyperbolic element $\g\in\G_1$ of positive trace, 
$$
\Psi^\G_\fa(\g) = \int_\g E^\G_{2,\fa}(z)dz = \sum_\tau \int_\g E^{\G_1}_{2,\fa}(\tau z)\tau'(z) dz = \sum_\tau \int_{\tau\g\tau^{-1}} E^{\G_1}_{2,\fa}(z)dz = \sum_\tau \Psi_\fa^{\G_1}(\tau\g\tau^{-1}).
$$
Suppose that $\tau\fa=\fb$, and set $\delta=\gs_\fb^{-1}\tau\gs_\fa.$ Since $\tau\gs_\fa\infty=\gs_\fb\infty$, we have $\delta=\bsm d &*\\ 0& d^{-1}\esm$ for some $d\neq0$. We claim that $d=\pm 1$. We have $\tau^{-1}\G_\fb\tau=\G_\fa$ and hence
$$
B:= \gs_\fa^{-1}\G_\fa \gs_\fa = (\tau\gs_\fa)^{-1}\G_\fb (\tau\gs_\fa) = \delta^{-1}(\gs_\fb^{-1}\G_\fb \gs_\fb) \delta = \delta^{-1}B\delta.
$$
Using that $B$ is generated by $\pm \bsm1 & 1\\0&1\esm$, a direction computation yields $d=\pm1$. We thus have the formal equality
\begin{align*}
\sum_{\g\in\G_{1,\fa}\backslash\G_1} j(\gs_\fa^{-1}\g,z)^{-2}&= \sum_{\g\in\G_{1,\fa}\backslash\G} j(\delta^{-1}\gs_\fb^{-1}\tau \g ,z)^{-2} =  j(\tau,z)^{-2} \sum_{\g\in\G_{1,\fa}\backslash\G} j(\gs_\fb^{-1}(\tau 
\g\tau^{-1}),\tau z)^{-2}\\
& =  j(\tau,z)^{-2}\sum_{\g\in\G_{1,\fb}\backslash\G_1} j(\gs_\fb^{-1}\g,\tau z)^{-2}
\end{align*}
where we used $\tau\G_{1,\fa} \tau^{-1}=\G_{1,\fb}$ and $\tau\G_1\tau^{-1}=\G_1$ for the last equality. Thus
$$
E^{\G_1}_{2,\fa}(z) = E^{\G_1}_{2,\fb}(\tau z) \tau'(z)
$$
and the equality of corresponding Rademacher symbols follows.
\end{proof}

Let $\G$ be a congruence subgroup of level $N$. By \lemref{lm:simplification} and \propref{proposition}, Rademacher symbols on $\G$ are rational-valued if Rademacher symbols on $\G(N)$ are rational-valued on hyperbolic elements. The computations of Takada (see the last paragraph of \secref{sec:Rademacher}) verify this for $\Psi_\infty^{\G(N)}$. Since $\G(N)$ is normal in $\G(1)$ and each cusp of $\G(N)$ is $\G(1)$-equivalent to the cusp at $\infty$, we conclude by the second part of \propref{proposition} that each Rademacher symbol on $\G(N)$ is rational-valued.

The groups $\Gamma_0(N)^+$ are obtained from $\G_0(N)$ by adding all (finitely many) Atkin--Lehner involutions and normalizing to matrices of determinant 1 by taking the quotient by a positive scalar. Explicitly, one has
$$
\Gamma_0(N)^+ = \left\{ e^{-1/2} \bpm a&b\\c&d\epm\in\SL_2(\R) :  a,b,c,d,e\in\Z,\  e\mid\mid N,\ e\mid a, d,\ N\mid c\right\},
$$
where $e\mid\mid N$ signifies that $e\mid N$ and $(e,\tfrac{N}{e})=1$. It is easily seen that the groups $\G_0(N)^+$ are commensurable to $\SL_2(\Z)$ but not necessarily conjugate to a subgroup of $\SL_2(\Z)$, that they have a single (inequivalent) cusp at $\infty$, and that $\G_0(N)^+$ contains $\G_0(N)$ as a normal subgroup of index $2^{\omega(N)}$, where $\omega(N)$ is the number of distinct prime divisors of $N$. Once again, by \lemref{lm:simplification} and \propref{proposition}, the Rademacher symbol on $\G_0(N)^+$ is rational if $\Psi_\infty^{\G_0(N)}$ is rational, which we have just established.

This proves \thmref{thm:2}. It would be very interesting to understand under which conditions the rationality of Rademacher symbols for a Fuchsian group $\G$ passes down to its normal subgroups $\G_1<\G$. For instance, finite-index normal subgroups of Fuchsian triangle groups are precisely the uniformizing groups of Galois Belyi curves, i.e.~curves that admit a Galois Belyi map, see \cite{ClarkVoight2019}, sometimes also called curves with many automorphisms, triangle curves, or quasiplatonic surfaces. There are only finitely many such curves for each given genus, but these include much studied examples such as the Fermat curves (for which Manin--Drinfeld is already established \cite{Rohrlich1977}).

\bibliographystyle{alpha}
\bibliography{biblio}

\end{document}